\numberwithin{equation}{section}
\newtheorem{theorem}{Theorem}[section]
\newtheorem{proposition}[theorem]{Proposition}
\newtheorem{lemma}[theorem]{Lemma}
\newtheorem{corollary}[theorem]{Corollary}
\theoremstyle{definition}
\theoremstyle{remark}
\newcommand{\R}{\mathbb{R}}
\newcommand{\Z}{\mathbb{Z}}
\newcommand{\eps}{\varepsilon}
\newcommand{\scriptD}{\mathcal{D}}
\newcommand{\scriptE}{\mathcal{E}}
\newcommand{\scriptH}{\mathcal{H}}
\newcommand{\scriptI}{\mathcal{I}}
\newcommand{\scriptK}{\mathcal{K}}
\newcommand{\scriptT}{\mathcal{T}}
\begin{document}
\title{Scale invariant Fourier restriction to a hyperbolic surface}
\author{Betsy Stovall}
\address{University of Wisconsin--Madison, 480 Lincoln Drive, Madison, Wisconsin, 53706}
\email{stovall@math.wisc.edu}
\begin{abstract}
This result sharpens the bilinear to linear deduction of Lee and Vargas for extension estimates on the hyperbolic paraboloid in $\R^3$ to the sharp line, leading to the first scale-invariant restriction estimates, beyond the Stein--Tomas range, for a hypersurface on which the principal curvatures have different signs.  
\end{abstract}
\maketitle

\section{Introduction}

We consider the Fourier restriction/extension problem for the hyperbolic paraboloid 
$$
S:= \{(\tau,\xi) \in \R^{1+2} : \tau = \xi_1 \xi_2\}.
$$
We denote by $\scriptE$ the extension operator,
\begin{equation} \label{E:def extn}
\scriptE f(t,x) := \int_{\R^2} e^{i(t,x)(\xi_1\xi_2,\xi)} f(\xi)\, d\xi.
\end{equation}
For consistency of exponents, we will consider the problem of establishing $L^r \to L^{2s}$ extension estimates for $\scriptE$, and we are primarily interested in the case when $r=s'$.  

In \cite{LeeNeg, VargasNeg}, Lee and Vargas independently established an essentially optimal $L^2$-based bilinear adjoint restriction estimate for $S$.  This result states that if $f$ and $g$ are supported in $1 \times 1$ rectangles that are separated from one another by a distance 1 in the horizontal direction and 1 in the vertical direction, then
\begin{equation} \label{E:L2 bilin}
\|\scriptE f \scriptE g\|_s \lesssim \|f\|_2\|g\|_2, \qquad s > \tfrac53.
\end{equation}
This two-parameter separation of the tiles is both necessary and troublesome.  On the one hand, necessity can be seen by considering the case when each of $f_\pm$ is supported on a $\tfrac12$-neighborhood of $(\pm 1,0)$.  On the other hand, the separation leads to difficulty in deducing linear restriction estimates from the bilinear ones.  Indeed, the natural analogue of the Whitney decomposition approach of \cite{TVV} leads to a sum in two scales, length and width, rather than a single distance scale, leading to a loss of the scaling line in the distinct approaches of Lee \cite{LeeNeg} and Vargas \cite{VargasNeg}.

The purpose of this note is to overcome this obstacle and recover the sharp line.  

\begin{theorem} \label{T:main}
With $\scriptE$ as in \eqref{E:def extn}, assume that the estimate
\begin{equation} \label{E:LsLr bilin 0}
\|\scriptE f \scriptE g\|_s \lesssim \|f\|_r\|g\|_r
\end{equation}
holds for some $\tfrac 32 < s < 2$ and $r < s'$, whenever $f$ and $g$ are supported on $1 \times 1$ rectangles that are separated from one another by a distance 1 in both the horizontal and vertical directions.  Then $\scriptE$ is of restricted strong type $(s',2s)$, and consequently of strong type $(\tilde s',2\tilde s)$ for all $ \tilde s > s$.  
\end{theorem}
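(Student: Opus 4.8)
The strategy is a Whitney-decomposition / bilinear-to-linear argument in the spirit of \cite{TVV}, but carried out with respect to a *single* scale parameter so as not to lose the scaling line. I would start by fixing a measurable set $F \subseteq \R^2$ and decomposing $F = \bigsqcup_k F_k$ according to dyadic scales, or rather perform an anisotropic Whitney decomposition of $F \times F$ away from the diagonal $\{\xi = \eta\}$. The key geometric observation is that the hyperbolic paraboloid is invariant under the two-parameter family of anisotropic scalings $(\xi_1,\xi_2) \mapsto (\lambda\xi_1, \mu\xi_2)$ (which rescale $\tau = \xi_1\xi_2$ by $\lambda\mu$) together with the hyperbolic rotations; these symmetries are exactly what let one convert a pair of $1\times 1$ tiles separated by distance $1$ in each direction into a pair of general Whitney tiles. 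So the plan is: decompose $F \times F$ into pairs of tiles $(\tau, \tau')$ that are "separated in both directions at their own scale", apply the rescaled version of the hypothesis \eqref{E:LsLr bilin 0} on each such pair, and then sum.

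\smallskip

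The main steps, in order, are: (1) Set up the restricted-strong-type formulation: it suffices to bound $\|\scriptE \1_F\|_{2s}$ by $|F|^{1/s'}$ up to constants, and by duality/interpolation one reduces to bounding $\langle \scriptE\1_F \overline{\scriptE\1_F}, \scriptE\1_G \overline{\scriptE\1_G}\rangle$-type quantities, or more directly $\|(\scriptE\1_F)^2\|_s$. (2) Perform the single-scale Whitney decomposition of $F\times F$: write $(\scriptE\1_F)^2 = \sum_{\nu} \scriptE\1_{F_\nu^1}\,\scriptE\1_{F_\nu^2}$ where for each dyadic scale the tiles $F_\nu^1, F_\nu^2$ are comparable-size rectangles that are separated by their common sidelength in both coordinate directions, so that after anisotropic rescaling the hypothesis applies with a uniform constant. (3) Apply \eqref{E:LsLr bilin 0} in rescaled form to each term: this gives $\|\scriptE\1_{F_\nu^1}\scriptE\1_{F_\nu^2}\|_s \lesssim |F_\nu^1|^{1/r}|F_\nu^2|^{1/r}$, with the rescaling Jacobians conspiring (because $r < s'$ gives room, and because the paraboloid scaling is "balanced") to preserve the estimate without a scale-dependent constant. (4) Sum over $\nu$: use the almost-orthogonality of the pieces $\scriptE\1_{F_\nu^1}\scriptE\1_{F_\nu^2}$ in a suitable sense — here one exploits that the outputs live at essentially disjoint frequency locations (the sum $\xi+\eta$ ranges over separated regions) — to pass from $\ell^s$-summability of the norms to the norm of the sum, and then sum the resulting geometric-type series in the scale, using the strict inequality $r < s'$ to gain a convergent factor $\sum_\nu (|F_\nu^1||F_\nu^2|)^{1/r} \lesssim |F|^{2/s'}$ via Hölder and the fact that $\sum |F_\nu^j| \lesssim |F|$. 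The final passage from restricted strong type $(s',2s)$ to strong type $(\tilde s', 2\tilde s)$ for $\tilde s > s$ is a routine interpolation / Marcinkiewicz-type argument, since one has an open range below.

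\smallskip

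The heart of the matter — and the step I expect to be the main obstacle — is step (4), the summation over scales, and in particular the almost-orthogonality needed to avoid the two-scale loss that plagued the approaches of Lee and Vargas. The naive Whitney decomposition of $F \times F$ for the hyperbolic paraboloid produces tiles separated by distance $1$ in the *length* direction but only by the *width* in the transverse direction (or vice versa), i.e. the two relevant scales decouple; applying the bilinear estimate then forces one to sum over both scales independently and costs a logarithm (or worse) precisely on the scaling line. The resolution must be to organize the decomposition so that only one scale is summed — presumably by grouping tiles at a fixed "distance scale" $2^{-j}$ and showing that, after the anisotropic rescaling that is natural for $S$, each such group is handled by a *single* application of the hypothesis \eqref{E:LsLr bilin 0} with a gain, and that distinct groups are genuinely almost orthogonal (e.g. their frequency supports $\{\xi+\eta\}$ are finitely overlapping at each scale). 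Establishing this clean single-scale structure, and verifying the almost-orthogonality rigorously (likely via an $L^s$, $s<2$, orthogonality lemma or a careful $TT^*$/interpolation argument adapted to $s$ between $3/2$ and $2$), is where the real work lies; everything else is bookkeeping with the parabolic rescaling and Hölder's inequality.
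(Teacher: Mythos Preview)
Your plan is essentially the Lee--Vargas strategy you yourself acknowledge loses the scaling line, and the ``resolution'' you sketch in step (4) is a hope, not a mechanism.  The two-parameter Whitney decomposition is not an artifact of a clumsy implementation: for the hyperbolic paraboloid the bilinear hypothesis genuinely requires separation in \emph{both} coordinate directions, so the natural decomposition of $F\times F$ really is indexed by pairs $(j,k)$, and the almost-orthogonality lemma from \cite{TVV} only lets you pass to an $\ell^s$ sum \emph{within} a fixed $(j,k)$, not across scales.  Summing $\sum_{j,k}(\cdots)$ with the rescaled bound \eqref{E:LsLr bilin} and the trivial size estimate $|F\cap\tau|\le\min\{2^{-j},2^{-J}\}\min\{2^{-k},2^{-K}\}$ produces a double geometric series that is only \emph{critically} convergent on the line $r=s'$; there is no extra decay to absorb, and ``grouping tiles at a fixed distance scale'' does not help because tiles at a fixed $j+k$ but different $(j,k)$ are not almost orthogonal in any useful sense (their output frequency supports overlap).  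So as written, your step (4) has no content beyond what Lee and Vargas already did.

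The paper's actual argument is structurally different and does not attempt a single global Whitney decomposition.  Instead it first decomposes $\Omega$ by vertical fiber length into pieces $\Omega(K)$; Vargas's argument already gives the sharp bound $\|\scriptE\chi_{\Omega(K)}\|_{2s}\lesssim|\Omega(K)|^{1/s'}$ for each piece individually.  The new idea is an \emph{inverse} result (Proposition~\ref{P:Omega tile}): if the Vargas bound for $\Omega(K)$ is within a factor $\eps$ of sharp, then $\Omega(K)$ is, up to controlled errors, contained in $O(\eps^{-C})$ dyadic tiles of the correct dimensions.  This structural dichotomy---either the extension is small, or the set is nearly a tile---is what makes the sum over $K$ converge: one proves a bilinear estimate with exponential decay in $|K-K'|$ for pairs of near-tiles of different heights (Lemma~\ref{L:separated tiles}), and this decay handles the off-diagonal terms when expanding $\|\sum_K\scriptE\chi_{\Omega(K)}\|_{2s}^{2s}$.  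None of this is visible in your proposal; the inverse/structure step is the missing idea.
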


To put the hypothesis on $s$ in context, we recall that for $s \leq \tfrac32$, linear extension estimates are known to be impossible, and for $s \geq 2$, they are already known, \cite{Tomas}.  

As is well-known, a (local, linear) $L^{r_0} \to L^{2s_0}$ extension estimate for some $r_0 > s_0'$ allows us, by interpolation with the $L^2$-based bilinear extension estimate \eqref{E:L2 bilin}, to establish the $L^r$-based bilinear extension estimate \eqref{E:LsLr bilin 0} for some $s > s_0$ and $r < s'$.  Replacing $s_0$ with $s$ is a loss (the extent of which depends on $1-r_0^{-1}-s_0^{-1}$), but $r < s'$ is a gain in the sense that the corresponding linear extension estimate $\scriptE:L^r \to L^{2s}$ is false.  

In \cite{LeeNeg, VargasNeg}, Lee and Vargas independently used the bilinear extension estimate \eqref{E:L2 bilin} to prove that 
\begin{equation} \label{E:extn}
\|\scriptE f\|_{2s} \lesssim \|f\|_{L^r}, 
\end{equation}
for all $s > \tfrac{5}3$, $r > s'$, and $f$ supported in the unit ball.  In \cite{ChoLee}, Cho--Lee used Guth's polynomial partitioning argument from \cite{Guth} to prove \eqref{E:extn} for $f$ supported in the unit ball and $2s=r > 3.25$; this was subsequently improved by Kim \cite{Jongchon} to the range $2s > 3.25$ and $r > s'$.  Using these results and the discussion in the preceding paragraph, Theorem~\ref{T:main} immediately yields the following slight improvement on Kim's result.

\begin{corollary} \label{C:linear restriction range}
For $2s > 3.25$, the extension operator $\scriptE$ is bounded from $L^{s'}$ to $L^{2s}$.  
\end{corollary}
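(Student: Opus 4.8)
The plan is to feed Kim's local linear estimate into the bilinear-to-linear machinery of Theorem~\ref{T:main}, by way of the interpolation observation recorded just before the statement of the corollary. Fix $s_1$ with $3.25 < 2s_1 < 4$; the remaining range $s_1 \ge 2$ is already contained in \cite{Tomas}, so nothing is lost. First I would choose an auxiliary exponent $s_0$ with $3.25 < 2s_0 < 2s_1$, for instance $2s_0 = \tfrac12(3.25 + 2s_1)$, so that $\tfrac12 < \tfrac1{s_1} < \tfrac1{s_0} < \tfrac8{13}$. By Kim's theorem \cite{Jongchon}, for every $r_0 > s_0'$ one has $\|\scriptE f\|_{2s_0} \lesssim \|f\|_{r_0}$ for $f$ supported in a fixed ball; restricting $f$ and $g$ to $1\times 1$ rectangles and applying the Cauchy--Schwarz inequality converts this into the bilinear bound $\|\scriptE f\,\scriptE g\|_{s_0} \lesssim \|f\|_{r_0}\|g\|_{r_0}$, with a constant uniform over all such pairs, in particular over pairs of $1\times 1$ rectangles separated by distance $1$ in both coordinate directions.

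The next step is to interpolate this bilinear estimate against the Lee--Vargas estimate \eqref{E:L2 bilin}, applied with some fixed $s_2 \in (\tfrac53, 2)$, which holds for the same class of separated pairs. Bilinear interpolation at parameter $\theta \in (0,1)$ yields
\[
\|\scriptE f\,\scriptE g\|_{s(\theta)} \lesssim \|f\|_{r(\theta)}\|g\|_{r(\theta)}, \qquad \frac1{s(\theta)} = \frac{1-\theta}{s_2} + \frac{\theta}{s_0}, \quad \frac1{r(\theta)} = \frac{1-\theta}{2} + \frac{\theta}{r_0},
\]
uniformly over separated pairs of $1\times 1$ rectangles. I would then verify three points. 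Since $\tfrac1{s_2}$ and $\tfrac1{s_0}$ both lie in $(\tfrac12,\tfrac23)$, automatically $\tfrac32 < s(\theta) < 2$ for every $\theta$. Next, $\tfrac1{s(\theta)} \to \tfrac1{s_0} > \tfrac1{s_1}$ as $\theta \uparrow 1$, so I may fix $\theta < 1$ with $s(\theta) < s_1$. Finally, writing $\tfrac1{r_0} = \tfrac1{s_0'} - \delta$ with $\delta > 0$ (any $r_0 > s_0'$ being admissible), a short computation gives
\[
\frac1{r(\theta)} + \frac1{s(\theta)} = 1 + (1-\theta)\Bigl(\frac1{s_2} - \frac12\Bigr) - \theta\delta,
\]
which, with $\theta$ now fixed and $\tfrac1{s_2} - \tfrac12 > 0$, exceeds $1$ once $\delta$ is chosen small enough; that is, $r(\theta) < s(\theta)'$.

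With the pair $(r,s) = (r(\theta), s(\theta))$ satisfying $\tfrac32 < s < 2$ and $r < s'$, Theorem~\ref{T:main} applies and shows that $\scriptE$ is of strong type $(\tilde s', 2\tilde s)$ for every $\tilde s > s(\theta)$; taking $\tilde s = s_1$ gives $\scriptE : L^{s_1'} \to L^{2s_1}$, and since $s_1$ with $2s_1 > 3.25$ was arbitrary, the corollary follows. I expect the one delicate point to be the quantitative balance in the middle step: interpolation raises the exponent from $s_0$ to $s(\theta)$, and one must ensure this loss does not erase the gain over the range $2s > \tfrac{10}{3}$ that \eqref{E:L2 bilin} provides on its own. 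This is exactly where the \emph{openness} of Kim's range $2s_0 > 3.25$ enters: nearly all of the interpolation mass can be placed on his estimate (i.e.\ $\theta$ close to $1$), so that $s(\theta)$ lies as close to $s_0$ as we please, while $\theta$ is kept strictly below $1$ so that the genuine improvement $r < s'$ inherited from \eqref{E:L2 bilin} survives.
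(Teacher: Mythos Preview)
Your argument is correct and is precisely the deduction the paper has in mind: the paper does not give a separate proof of the corollary but simply points to Kim's local estimate, the interpolation paragraph preceding the statement, and Theorem~\ref{T:main}. You have made that interpolation explicit, verifying the three conditions $\tfrac32 < s(\theta) < 2$, $s(\theta) < s_1$, and $r(\theta) < s(\theta)'$ exactly as required.
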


To the author's knowledge, this is the first scalable restriction estimate for a negatively curved hypersurface, beyond the Stein--Tomas range ($s=2$).  

\subsection*{Terminology} A constant will be said to be admissible if it depends only on $s,r$.  The inequality $A \lesssim B$ means that $A \leq CB$ for some implicit constant $C$, and implicit constants will be allowed to change from line to line.  A dyadic interval is an interval of the form $[m2^{-n},(m+1)2^{-n}]$, for some $m,n \in \Z$, and $\scriptI_n$ denotes the set of all dyadic intervals of length $2^{-n}$. A tile is a product of two dyadic intervals, and $\scriptD_{J,K}$ denotes the set of all $2^{-J} \times 2^{-K}$ tiles.  

\subsection*{Outline of proof}  To prove our restricted strong type estimate, it suffices to bound the extension of a characteristic function.  Our starting point is the bilinear to linear deduction of Vargas \cite{VargasNeg}, which shows that, under the hypotheses of Theorem~\ref{T:main}, the extension of the characteristic function of a set $\Omega$ with roughly constant fiber length obeys the scalable restriction estimate $\|\scriptE \chi_\Omega\|_{2s} \lesssim |\Omega|^{\frac1{s'}}$.  In \cite{VargasNeg}, off-scaling estimates are obtained by subdividing a set $\Omega$ in the unit cube into subsets having constant fiber length.  Off-scaling contributions from those subsets with very short fibers are small (because the sets themselves are small), and adding these amounts to summing a geometric series.  

We wish to remain on the sharp line, so must be more careful.  Our first step, taken in Section~2, is to understand when Vargas's constant fiber estimate can be improved.  To this end, we prove a dichotomy result:  If $\Omega$ has constant fiber length, then either $\Omega$ is highly structured (more precisely, $\Omega$ is nearly a tile), or we have a better bound on the extension of $\chi_\Omega$.  Roughly speaking, this reduces matters to controlling the extension of a union of tiles $\tau_k$ having heights $2^{-k}$, which is the task of Section~3.  We can  estimate
$$
\|\scriptE \chi_{\bigcup \tau_k}\|_{2s} \lesssim \bigl(\sum \|\scriptE \chi_{\tau_k}\|_{2s}^{2s}\bigr)^{\frac1{2s}} + \text{off-diagonal terms},
$$
where the off-diagonal terms involve products $\scriptE \chi_{\tau_k}\scriptE \chi_{\tau_{k'}}$, with $|k-k'|$ large.    
Boundedness of the main term follows from Vargas's estimate and convexity ($2s>s'$).  It remains to bound the off-diagonal terms, for which it suffices to prove a bilinear estimate with decay:  
$$
\|\scriptE \chi_{\tau_k} \scriptE \chi_{\tau_{k'}}\|_s \lesssim 2^{-c_0|k-k'|}\max\{|\tau_k|,|\tau_{k'}|\}^{\frac1{s'}},
$$
and we prove this by combining the bilinear extension estimate for separated tiles with a further decomposition.  

Of course, we have lied.  In Section~2, our dichotomy is not that a constant fiber length set $\Omega$ is either a tile or has zero extension, and so we still have remainder terms that must be summed.  To address this, we argue more quantitatively than has been suggested above:  Any constant fiber length set can be approximated by a union of tiles, where the number of tiles and tightness of the approximation depends on how sharp is our estimate $\|\scriptE\chi_\Omega\|_{2s}\lesssim |\Omega|^{\frac1{s'}}$; then we must bound extensions of sets $\bigcup_k \bigcup_{\tau \in \scriptT_k}\tau$, where $\scriptT_k \subseteq \scriptD_{j(k),k}$ may be large (but fortunately, not too large).  

\subsection*{Acknowledgements}  This work has been supported in part by a grant from the National Science Foundation (DMS-1600458), and was carried out in part while the author was in residence at the Mathematical Sciences Research Institute (MSRI) in Berkeley, California, during the Spring of 2017, a visit that was supported in part by a National Science Foundation grant to MSRI (DMS-1440140).  The author would like to thank Sanghyuk Lee, Andreas Seeger, and Ana Vargas, from whom she learned of this problem and some of its history.

\section{An inverse problem related to Vargas's linear estimate}

To prove Theorem~\ref{T:main}, it suffices to prove that $\|\scriptE \chi_\Omega\|_{2s} \lesssim |\Omega|^{\frac1{s'}}$, for all measurable sets $\Omega$.  By scaling, it suffices to consider $\Omega$ contained in the unit cube, $\tau_0$.  In \cite{VargasNeg}, Vargas proved the following.

\begin{theorem}[Vargas, \cite{VargasNeg}] \label{T:Vargas}
For each $K \geq 0$, let 
$$
\Omega(K) := \{\xi \in \Omega : \scriptH^1(\pi_1^{-1}(\xi_1)) \sim 2^{-K}\}.
$$
Then under the hypotheses of Theorem~\ref{T:main}, for any measurable set $\Omega' \subseteq \Omega(K)$,
\begin{equation} \label{E:OmegaK}
\|\scriptE \chi_{\Omega'}\|_{2s} \lesssim |\Omega(K)|^{\frac1{s'}}.
\end{equation}
\end{theorem}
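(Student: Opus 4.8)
\emph{Proof sketch (following Vargas \cite{VargasNeg}).}

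The plan is to run a bilinear-to-linear argument of Tao--Vargas--Vega type \cite{TVV}, adapted to the two-parameter separation in the hypothesis. By the identity $\|\scriptE\chi_{\Omega'}\|_{2s}^2=\bigl\|\,|\scriptE\chi_{\Omega'}|^2\,\bigr\|_s$ it suffices to prove $\bigl\|\,|\scriptE\chi_{\Omega'}|^2\,\bigr\|_s\lesssim|\Omega(K)|^{2/s'}$. Writing $M:=|\pi_1(\Omega(K))|$, the only features of $\Omega'$ we will use are $|\Omega(K)|\sim M2^{-K}$, $|\Omega'|\le|\Omega(K)|$, $|\pi_1(\Omega')|\le M$, and the fact that every vertical fiber of $\Omega'$ has length $\lesssim 2^{-K}$. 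First I would expand $|\scriptE\chi_{\Omega'}|^2$ as a double integral over $\Omega'\times\Omega'$, discard the two null sets $\{\xi_1=\xi_1'\}$ and $\{\xi_2=\xi_2'\}$, and run a Whitney decomposition in each of the two coordinates: for a.e.\ $(\xi,\xi')$ there is a unique pair of dyadic scales $(j,k)$ and a unique pair of comparable tiles $Q,Q'\in\scriptD_{j,k}$ that are separated by $\sim 2^{-j}$ horizontally and $\sim 2^{-k}$ vertically, with $\xi\in Q$ and $\xi'\in Q'$, so that
$$
|\scriptE\chi_{\Omega'}|^2=\sum_{j,k}\ \sum_{(Q,Q')\ \text{Whitney}}\scriptE\chi_{\Omega'\cap Q}\,\overline{\scriptE\chi_{\Omega'\cap Q'}}.
$$

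For a single term I would use the two-parameter anisotropic symmetry $(\tau,\xi_1,\xi_2)\mapsto(\lambda\mu\tau,\lambda\xi_1,\mu\xi_2)$ of $S$, together with affine shifts of $\xi_1$ and $\xi_2$, to map $(Q,Q')$ to a pair of unit cubes at distance $\sim 1$ in both directions; feeding this into the hypothesized bilinear estimate and bookkeeping Jacobians yields
$$
\|\scriptE\chi_{\Omega'\cap Q}\,\scriptE\chi_{\Omega'\cap Q'}\|_s\lesssim 2^{2(j+k)(\frac1r-\frac1{s'})}\,|\Omega'\cap Q|^{\frac1r}\,|\Omega'\cap Q'|^{\frac1r}.
$$
The positive exponent $\tfrac1r-\tfrac1{s'}$ is the off-scaling loss incurred by the hypothesis $r<s'$; it is recaptured from $|\Omega'\cap Q|\le\min(2^{-j},M)\min(2^{-k},2^{-K})$ and $\sum_{Q\in\scriptD_{j,k}}|\Omega'\cap Q|=|\Omega'|$. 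After summing the Whitney pairs at a fixed scale and then over $(j,k)$, the exponents conspire (via $2(\tfrac1r-\tfrac1{s'})=(\tfrac2r-\tfrac1{s'})-\tfrac1{s'}$) so that the scale-$(j,k)$ contribution carries a decaying factor $2^{-(j+k)/s'}$ in the on-scaling range, while in the intermediate range the strict inequality $r<s'$ is exactly what turns a logarithmic divergence over dyadic scales into a convergent geometric series; the outcome is the scale-invariant bound $|\Omega(K)|^{2/s'}\sim(M2^{-K})^{2/s'}$ rather than one worse by a power of a logarithm.

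The main obstacle is the almost-orthogonality underlying the fixed-scale summation. Unlike the elliptically curved case, $S$ is doubly ruled, containing the lines $\{\xi_1=\text{const}\}$ and $\{\xi_2=\text{const}\}$, and translating a Whitney pair along a ruling produces many pairs whose products have essentially the same frequency support: that support is a thin neighborhood of the graph $w=\xi_1^0 v+\xi_2^0 u-uv$ (with $(\xi_1^0,\xi_2^0)$ the center of $Q$) over a $\sim 2^{-j}\times 2^{-k}$ rectangle in $(u,v)=(\xi_1-\xi_1',\xi_2-\xi_2')$, and two such graphs whose centers lie on a common ruling-direction line coincide to within the available thickness. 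A plain Whitney decomposition therefore does not suffice; one must use a decomposition adapted to this ruled geometry, quantify exactly how far one can slide along a ruling while remaining inside a set of constant fiber length $2^{-K}$ --- this is where that hypothesis is essential, pinning the fixed-scale overlap at roughly $2^{\max(0,\min(j,k)-K)}$ --- and then show this overlap penalty is defeated by the per-scale decay $2^{-(j+k)/s'}$, which is the point at which $s>\tfrac32$ (equivalently $s'<2s$) is used. Carrying out this reorganization so that it meshes with the rescalings, and checking that the resulting double sum closes on the sharp exponent, is the technical heart of Vargas's argument \cite{VargasNeg}.
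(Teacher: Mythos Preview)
Your first two paragraphs correctly capture Vargas's argument and match the paper's proof: two-parameter Whitney decomposition, rescaled bilinear estimate giving the per-pair bound with the scaling factor $2^{2(j+k)(1/r-1/s')}$, and the size bound $|\Omega'\cap Q|\lesssim\min(2^{-j},M)\min(2^{-k},2^{-K})$ to close the double sum.

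Your third paragraph, however, manufactures an obstacle that is not there.  The difficulty you describe is an artifact of expanding $|\scriptE\chi_{\Omega'}|^2$ rather than $(\scriptE\chi_{\Omega'})^2$; the identity $\|\scriptE\chi_{\Omega'}\|_{2s}^2=\|(\scriptE\chi_{\Omega'})^2\|_s$ is equally valid, and with the \emph{sum} the Fourier support of $\scriptE\chi_{Q}\,\scriptE\chi_{Q'}$ projects, in the spatial frequency variables, to the sumset $Q+Q'$.  At a fixed scale $(j,k)$ these sumsets are $\sim 2^{-j}\times 2^{-k}$ rectangles centered at roughly $2\cdot\text{center}(Q)$, hence with $O(1)$ overlap as the Whitney pair varies.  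The almost-orthogonality lemma of \cite{TVV} then applies directly (this is where $s\le 2$ is used), with no penalty and no reference to the ruled structure of $S$.  Your computation that translates of a pair along a ruling have coincident $w$-graphs is correct for the Minkowski \emph{difference}, but irrelevant for the sum.

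Consequently, the constant-fiber-length hypothesis on $\Omega(K)$ is \emph{not} used for orthogonality at all; it enters only through the size bound $|\Omega'\cap Q|\lesssim\min(2^{-j},M)\min(2^{-k},2^{-K})$, exactly as you already wrote in paragraph two.  Likewise, the hypothesis $s>\tfrac32$ is used to make the tail sums $\sum_{j>J}2^{j(3/s-2)}$ and $\sum_{k>K}2^{k(3/s-2)}$ converge, not to beat an overlap factor.  With the square in place of the modulus-squared, the argument in your first two paragraphs is already the complete proof.
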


This version differs slightly from the one stated in \cite{VargasNeg}, but it follows from the same proof.  In proving the next proposition, we will review Vargas's argument, so the reader may verify the above-stated version below.

Our first step is to solve an inverse problem:  Characterize those sets $\Omega = \Omega(K)$ for which the inequality in \eqref{E:OmegaK} can be reversed.  We record here a useful rescaling of the bilinear estimate \eqref{E:LsLr bilin 0}, namely, for $f,g$ supported on tiles in $\scriptD_{j,k}$ that are separated by a distance $2^{-k}$ in the vertical direction and $2^{-j}$ in the horizontal direction, \begin{equation} \label{E:LsLr bilin}
\|\scriptE f \scriptE g\|_{ s} \lesssim 2^{-(j+k)(2-\tfrac2{ s} - \tfrac 2{ r})}\|f\|_{ r} \|g\|_{ r}.
\end{equation}

\begin{proposition}\label{P:Omega tile}
Assume that the hypotheses of Theorem~\ref{T:main} hold.  Let $\Omega \subseteq \tau_0$ be a measurable set, and assume that $\Omega = \Omega(K)$ for some integer $K \geq 0$.  Choose a nonnegative integer $J$ such that $|\pi_1(\Omega)| \sim 2^{-J}$, and let $\eps \lesssim 1$ denote the smallest dyadic number such that
$$
\|\scriptE \chi_{\Omega'}\|_{2s} \leq \eps |\Omega|^{\frac1{s'}}, 
$$
for every measurable $\Omega' \subseteq \Omega$.  Then $\Omega = \bigcup_{0 < \delta \leq \eps} \Omega_\delta$, with the union taken over dyadic $\delta$.  For each $\delta$, $\Omega_\delta$ is contained in a union of $O(\delta^{-C})$ tiles $\tau \in \scriptT_\delta \subseteq \scriptD_{J,K}$, and for each subset $\Omega' \subseteq \Omega_\delta$, $\|\scriptE \chi_{\Omega'}\|_{2s}\lesssim \delta |\Omega|^{\frac1{s'}}$.  
\end{proposition}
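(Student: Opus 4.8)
The plan is to re-run Vargas's proof of Theorem~\ref{T:Vargas}, tracking its inputs carefully enough that a genuine power gain appears whenever $\Omega$ fails to concentrate on boundedly many tiles of $\scriptD_{J,K}$, and then to organize $\Omega$ into layers according to the size of that gain. First I would recall the mechanism of Theorem~\ref{T:Vargas}: for $\Omega'\subseteq\Omega$ one performs a Whitney-type decomposition in both the horizontal and the vertical variables, writing $(\scriptE\chi_{\Omega'})^2 = \sum_{j,k}\sum_{(\tau,\tau')}\scriptE\chi_{\Omega'\cap\tau}\,\scriptE\chi_{\Omega'\cap\tau'}$ modulo a diagonal term, the inner sum over pairs $\tau,\tau'\in\scriptD_{j,k}$ separated by $\sim 2^{-j}$ horizontally and $\sim 2^{-k}$ vertically; one then takes $L^s$ norms, uses almost-orthogonality at each fixed scale $(j,k)$, inserts the rescaled bilinear estimate \eqref{E:LsLr bilin}, and sums the resulting series in $(j,k)$. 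What I want to extract is not merely $\|\scriptE\chi_{\Omega'}\|_{2s}\lesssim|\Omega|^{1/s'}$ but the \emph{localized} statement: if $\Omega'$ meets only tiles of $\scriptD_{J,K}$ in which $\Omega$ has density at most $\rho$, then $\|\scriptE\chi_{\Omega'}\|_{2s}\lesssim\rho^{\kappa}|\Omega|^{1/s'}$ for some admissible $\kappa>0$. The mechanism for this gain is visible already in a toy model: rescaling an occupied tile to $\tau_0$ and applying Theorem~\ref{T:Vargas} at unit scale gives $\|\scriptE\chi_{\Omega\cap\tau}\|_{2s}\lesssim|\Omega\cap\tau|^{1/s'}$ on each tile; since $\sum_{\tau\in\scriptD_{J,K}}|\Omega\cap\tau| = |\Omega|\sim 2^{-J-K} = |\tau|$ for $\tau\in\scriptD_{J,K}$, each occupied tile has mass $\lesssim\rho|\Omega|$ and there are $O(\rho^{-1})$ of them; and if the pieces were $\ell^{2s}$-orthogonal one would get $\|\scriptE\chi_{\Omega'}\|_{2s}\lesssim(\sum_\tau|\Omega\cap\tau|^{2s/s'})^{1/2s}\lesssim(\rho|\Omega|)^{1/s'-1/2s}(\sum_\tau|\Omega\cap\tau|)^{1/2s}\cdot|\Omega|^{-1/2s+1/s'}$, i.e.\ a factor $\rho^{\kappa}$ with $\kappa = 1-\tfrac3{2s}>0$ (here $2s>s'$ since $s>\tfrac32$). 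The bilinear estimate \eqref{E:LsLr bilin} is precisely what must realize this heuristic honestly, by absorbing the cross terms $\scriptE\chi_{\Omega\cap\tau}\scriptE\chi_{\Omega\cap\tau'}$.

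Granting the localized estimate, the layer decomposition is bookkeeping. Sort the tiles meeting $\Omega$ by dyadic density, $\scriptA_m := \{\tau\in\scriptD_{J,K}: |\Omega\cap\tau|\sim 2^{-m}|\tau|\}$, so that $\#\scriptA_m\lesssim 2^m$ from $\sum_\tau|\Omega\cap\tau| = |\Omega|\sim|\tau|$, and set $\Omega^{(m)} := \Omega\cap\bigcup_{\tau\in\scriptA_m}\tau$, a partition of $\Omega$. For a dyadic $\delta<\eps$, let $\scriptT_\delta$ be the union of those $\scriptA_m$ with $2^{-\kappa m}\sim\delta$ and $\Omega_\delta$ the corresponding union of the $\Omega^{(m)}$; only $O(1)$ values of $m$ occur, so $\#\scriptT_\delta\lesssim 2^{\kappa^{-1}\log_2(1/\delta)} = \delta^{-1/\kappa}$, and the localized estimate gives $\|\scriptE\chi_{\Omega'}\|_{2s}\lesssim\delta|\Omega|^{1/s'}$ for $\Omega'\subseteq\Omega_\delta$. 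The densest tiles, those with $2^{-\kappa m}\gtrsim\eps$ (equivalently $m\lesssim\kappa^{-1}\log_2(1/\eps)$), I would collect into a single top layer $\scriptT_\eps$, $\Omega_\eps$: then $\#\scriptT_\eps\lesssim\sum_{m\lesssim\kappa^{-1}\log_2(1/\eps)}2^m\lesssim\eps^{-1/\kappa}$, and for $\Omega'\subseteq\Omega_\eps\subseteq\Omega$ the bound $\|\scriptE\chi_{\Omega'}\|_{2s}\le\eps|\Omega|^{1/s'}$ is immediate from the definition of $\eps$ — no gain is needed on the top layer, and this is the ``$\Omega$ is nearly a bounded union of tiles'' alternative of the outline. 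Since the $\Omega^{(m)}$ exhaust $\Omega$, taking $C = 1/\kappa$ yields $\Omega = \bigcup_{0<\delta\le\eps}\Omega_\delta$ with all the stated properties.

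The main obstacle is the localized estimate of the first paragraph: Vargas's argument is written to deliver only the final bound, and the almost-orthogonality at each scale, the combinatorial count of occupied Whitney pairs at scales coarser than $(J,K)$, and the geometric summation in $j+k$ each cost constants that must be controlled finely enough that the $\rho^\kappa$ gain survives (in particular $\kappa$ may come out smaller than $1-\tfrac3{2s}$, which is harmless). A secondary technical point is that the pieces $\Omega^{(m)}$ need not satisfy $\Omega^{(m)} = \Omega^{(m)}(K)$ exactly, since intersecting with a union of tiles can shorten fibers; one should either decompose each $\Omega^{(m)}$ further into constant-fiber-length sets — harmless, as the short-fiber pieces are small and can be summed geometrically, exactly as in Vargas's off-scaling argument — or check that his proof tolerates the weaker hypothesis that fibers have length $\lesssim 2^{-K}$, the value $\sim 2^{-K}$ being used only for the lower bound $|\pi_1(\Omega)|\gtrsim 2^{-J}$. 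Neither difficulty looks fatal, but the first is where the substantive work lies.
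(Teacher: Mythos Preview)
Your approach is correct and takes a genuinely different route from the paper's. The paper decomposes in three nested stages, each based on one-dimensional structure: first it sorts points of $S=\pi_1(\Omega)$ by the scale of the largest dyadic interval on which $S$ is $\eta^C$-dense around them (yielding sets $\Omega^1_\eta$ whose projection lies in $O(\eta^{-2C})$ intervals of $\scriptI_J$), then it sorts the resulting pieces by the length of their horizontal slices (making each piece nearly a product), and finally it repeats the first step with the roles of $\xi_1,\xi_2$ interchanged. At each stage the gain is extracted by re-running Vargas's sum with an improved bound on $|I_j\cap S_\eta|$ for $|j-J|\lesssim C\log\eta^{-1}$, exploiting that points of $S_\eta$ cannot lie in intervals of the ``wrong'' density.

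Your single-stage sort by two-dimensional density on $\scriptD_{J,K}$ tiles is more direct. The localized estimate you need does follow from re-running the same Vargas sum: the hypothesis $|\Omega'\cap\tau|\le\rho|\tau|$ for $\tau\in\scriptD_{J,K}$ propagates to the bound $|\Omega'\cap\tau|\le\rho\,2^{\max(J-j,0)+\max(K-k,0)-J-K}$ for $\tau\in\scriptD_{j,k}$, and taking the minimum of this with the standard bound \eqref{E:Vargas size bound} in the sum yields a gain $\rho^{\kappa}$ with $2\kappa$ essentially $\min\{\tfrac2r+\tfrac2s-2,\,2-\tfrac3s\}$ (up to a harmless logarithm), the first term governing scales with $j\le J,\,k\le K$ and the second those with $j>J,\,k>K$. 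Your secondary worry about $\Omega^{(m)}\ne\Omega^{(m)}(K)$ is indeed harmless for exactly the reason you give: the Vargas sum uses only fiber length $\lesssim 2^{-K}$ and $|\pi_1(\Omega')|\lesssim 2^{-J}$, both inherited from $\Omega'\subseteq\Omega$. What the paper's three-stage argument buys is a more explicit identification of \emph{which} projective obstruction (spread of $\pi_1$, failure to be a product, spread of $\pi_2$) is responsible for the gain, but for the conclusion of Proposition~\ref{P:Omega tile} as stated --- and for its use in Section~3 --- your argument delivers the same output more economically.
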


\begin{proof}[Proof of Proposition~\ref{P:Omega tile}]
Our decomposition will be done in three stages.  Our first decomposition will be of $\Omega$ into sets $\Omega_\eta^1$, with $\pi_1(\Omega_\eta^1)$ nearly an interval, $I \in \scriptI_J$.  Our second decomposition will be of $\Omega_\eta^1$ into sets $\Omega_{\eta,\rho}^2$, $\rho \leq \eta$, each of which is nearly a product of $I$ with a set of measure $2^{-K}$.  Our third decomposition will be of $\Omega_{\eta,\rho}^2$ into sets $\Omega_{\eta, \rho, \delta}^3$, $\delta \leq \rho$, each of which is nearly a product of $I$ with an interval in $\scriptI_K$.  The product of two dyadic intervals is a tile, so we take $\Omega_\delta := \bigcup_{\rho \geq \delta}\bigcup_{\eta \geq \rho} \Omega_{\eta,\rho,\delta}^3$; the $(\log \delta^{-1})^2$ factor that arises from taking this union is harmless.

Let $S := \pi_1(\Omega)$.  We know that $|S| \sim 2^{-J}$ and that $S \subseteq [-1,1]$.  Let $\xi_1\in S$, and for each $0 < \eta < \eps$, let $I_\eta(\xi_1)$ be the maximal dyadic interval $I \ni \xi_1$ satisfying $|I \cap S| \geq \eta^C |I|$, if such an interval exists.  We record that $|I_\eta(\xi_1)| \leq \eta^{-C}2^{-J}$, and if $\xi_1$ is a Lebesgue point of $S$, then $|I_\eta(\xi_1)| > 0$.  Let 
$$
T_\eta := \{\xi_1 \in S : |I_\eta(\xi_1)| \geq \eta^C 2^{-J}\},
$$
and let $S_\eps := T_\eps$, $S_\eta := T_\eta \setminus T_{2\eta}$, for dyadic $0 < \eta < \eps$.  Then a.e.\ (indeed, every Lebesgue) point of $S$ is contained in a unique $S_\eta$.  We set $\Omega_\eta^1 := \Omega \cap \pi_1^{-1}(S_\eta)$.  

\begin{lemma} \label{L:Omega eta}
For each $0 < \eta \leq \eps$, $S_\eta$ is contained in a union of $O(\eta^{-2C})$ dyadic intervals $I \in \scriptI_J$, and for each  $\eta < \eps$ and each subset $\Omega' \subseteq \Omega_\eta^1$, $\|\scriptE \chi_{\Omega'}\|_{2s}\lesssim \eta^2 |\Omega|^{\frac1{s'}}$.  
\end{lemma}

\begin{proof}[Proof of Lemma~\ref{L:Omega eta}]
All of the conclusions except for the bound on the extension $\scriptE \chi_{\Omega'}$ with $\Omega' \subseteq \Omega_\eta^1$ are immediate.  To establish this bound, we optimize Vargas's proof of Theorem~\ref{T:Vargas}.  The argument is largely the same as that in \cite{VargasNeg}, so we will be brief.  Performing a Whitney decomposition in each variable $\xi_1,\xi_2$ separately and applying the almost orthogonality lemma from \cite{TVV} (for which it is important that $s\leq 2$),
$$
\|\scriptE \chi_{\Omega'}\|_{2s}^2 \leq \sum_{k,j} \bigl(\sum_{\tau \sim \tau' \in \scriptD_{j,k}} \|\scriptE \chi_{\Omega' \cap \tau} \scriptE \chi_{\Omega' \cap \tau}\|_s^s\bigr)^{\frac1s},
$$
where we say that $\tau \sim \tau'$ if $\tau$ and $\tau'$ are $2^{-j}$ separated in the horizontal direction and $2^{-k}$ separated in the vertical direction.  

From our hypothesis that we have bilinear extension estimates for some $r < s' < 2s$,
\begin{align*}
\|\scriptE \chi_{\Omega'}\|_{2s}^2 &\lesssim \sum_{k,j} 2^{-(j+k)(2-\frac 2s - \frac 2r)}\bigl(\sum_\tau |\Omega' \cap \tau|^{\frac{2s}r}\bigr)^{\frac1s} \\
&\lesssim \sum_{k,j} 2^{-(j+k)(2-\frac 2s - \frac 2r)} \max_{\tau \in \scriptD_{j,k}} |\Omega' \cap \tau|^{\frac2r-\frac1s}|\Omega'|^{\frac1s}.
\end{align*}
To bound this double sum, Vargas used the inequality
\begin{equation} \label{E:Vargas size bound}
|\Omega' \cap \tau| \lesssim \min\{2^{-j},2^{-J}\}\min\{2^{-k},2^{-K}\}.
\end{equation}
The definition of $\Omega_\eta^1$ will allow us to improve on this bound.  

Take $C$ exactly as in the definition of $T_\eta$.  For $I_j \in \scriptI_j$, we have the trivial bound
$$
|I_j \cap S_\eta| \leq \min\{|I_j|,|S_\eta|\} \leq \min\{2^{-j},2^{-J}\},
$$
but when $|j-J| < C\log \eta^{-1}$, we get a dramatic improvement.  Indeed, if $\eta^C2^{-J} \leq 2^{-j} \leq \eta^{-C}2^{-J}$, then $|S_\eta \cap I_j| < \eta^C|I_j|$, and if $\eta^{2C}2^{-J} < 2^{-j} < \eta^C2^{-J}$, then $|S_\eta \cap I_j| \lesssim \eta^{2C}2^{-J}$; in each case, were the stated bound to fail, we could find a point $\xi_1 \in S_\eta$ that belonged to some $S_{\eta'}$ with $\eta' > \eta$, a contradiction.  As 
$$
|\Omega' \cap (I_j \times I_k)| \leq |S_\eta \cap I_j| \min\{2^{-k},2^{-K}\},
$$
the above improvement and the inequalities $r < s' < 2s$ lead to 
$$
\|\scriptE \chi_{\Omega'}\|_{2s} \lesssim \eta^{C'} 2^{-(J+K)(\frac1r-\frac1{2s})}|\Omega'|^{\frac1{2s}} \lesssim \eta^{C'}|\Omega|^{\frac1{s'}},
$$
for $C'>0$ some admissible constant dictated by $C,r,s$; we can reverse engineer $C$ so that $C'=2$.  
\end{proof}

Now for our second decomposition.  Although $\pi_1(\Omega_\eta^1)$ (roughly) projects down to a small number of intervals, an individual horizontal slice $\pi_2^{-1}(\xi_2) \cap \Omega_\eta^1$ might be much smaller.  Our next step is to decompose into sets where the size of a nonempty slice is roughly comparable to the size of the projection of the whole.  (Sets with this property are nearly products.)  

Fix $0 < \eta \leq \eps$.  For dyadic $0 < \rho \leq \eta$, we define
$$
V_\rho = \{\xi_2 \in \pi_2(\Omega_\eta^1) : \scriptH^1(\pi_2^{-1}(\xi_2) \cap \Omega_\eta^1) \geq \rho^C2^{-J}\},
$$
and set $U_\eta := V_\eta$, $U_\rho := V_\rho \setminus V_{2\rho}$, for $\rho < \eta$.  We define $\Omega^2_{\eta,\rho} := \pi_2^{-1}(U_\rho) \cap \Omega_\eta^1$.  

\begin{lemma} \label{L:Omega eta rho}
For each $0 < \rho < \eta \leq \eps$, and each subset $\Omega' \subseteq \Omega_{\eta,\rho}^2$, $\|\scriptE \chi_{\Omega'}\|_{2s}\lesssim \rho^2 |\Omega|^{\frac1{s'}}$.
\end{lemma}

\begin{proof}[Proof of Lemma~\ref{L:Omega eta rho}]
The proof is similar to Lemma~\ref{L:Omega eta}, the only difference being that the bound on the intersection of a tile $\tau \in \scriptD_{j,k}$ with $\Omega' \subseteq \Omega_{\eta,\rho}^2$ is
$$
|\tau \cap \Omega'| \lesssim \min\{\rho^C 2^{-J},2^{-j}\} \min\{2^{-K},2^{-k}\}.
$$
Alternately, one may deduce this directly from Vargas' Theorem~\ref{T:Vargas} by interchanging the indices, and then using the size estimate $|\Omega_{\eta,\rho}^2| \lesssim \rho^C 2^{-(J+K)} \sim \rho^C|\Omega|$, for $\rho < \eta$.  
\end{proof}

Now our third decomposition.  A single $\Omega_{\eta,\rho}^2$ is ``nearly'' a product, but $\pi_2(\Omega_{\eta,\rho}^2)$ might be far from an interval.  This can be fixed in a similar manner to the decomposition of $\Omega$ into the $\Omega_\eta^1$:  Indeed, we perform exactly the same decomposition as before, only interchanging the roles of the indices.  

We complete the proof by taking unions as described at the outset.  The factors of $\eta^2$ and $\rho^2$ in Lemmas~\ref{L:Omega eta} and~\ref{L:Omega eta rho} (and the factor of $\delta^2$ in the analogue for $\Omega_{\eta,\rho,\delta}^3$) mean that the resulting factor of $(\log \delta^{-1})^2$ is indeed harmless.  
\end{proof}

\section{Extensions of characteristic functions of near tiles}

We will complete the proof of Theorem~\ref{T:main} by summing the extensions of the sets that arise in Proposition~\ref{P:Omega tile}.  Let $\scriptK(\eps)$ denote the collection of all $K \in \Z_{\geq 0}$ for which $\eps$ is the smallest dyadic number for which $\|\scriptE \chi_{\Omega'}\|_{2s} \leq \eps |\Omega(K)|^{\frac1{s'}}$ holds for all $\Omega' \subseteq \Omega(K)$.  

\begin{lemma} \label{L:Omega delta decouple}
For $\eps > 0$, $0 < \delta \leq \eps$, under the hypotheses of Theorem~\ref{T:main},
$$
\|\sum_{K \in \scriptK(\eps)} \scriptE \chi_{\Omega_\delta(K)}\|_{2s}^{2s} \lesssim \bigl(\log \delta^{-1}\bigr)^2 \sum_{K \in \scriptK(\eps)} \|\scriptE \chi_{\Omega_\delta(K)}\|_{2s}^{2s} + \delta |\Omega|^{\frac{2s}{s'}}.
$$ 
\end{lemma}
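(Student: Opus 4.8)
The plan is to pass to the $L^s$ norm of a square, expand into a bilinear sum over $\scriptK(\eps)\times\scriptK(\eps)$, and split that sum at the scale $|K-K'|\sim\log\delta^{-1}$: the near-diagonal part will be absorbed into the first term on the right, and the off-diagonal part into the error $\delta|\Omega|^{2s/s'}$. Writing $F_K:=\scriptE\chi_{\Omega_\delta(K)}$ and $M:=\lceil A\log_2\delta^{-1}\rceil$ for an admissible constant $A$ to be fixed below, and using $s\ge1$,
\[
\Bigl\|\sum_{K\in\scriptK(\eps)}F_K\Bigr\|_{2s}^{2s}=\Bigl\|\bigl|\sum_K F_K\bigr|^2\Bigr\|_s^s=\Bigl\|\sum_{K,K'}F_K\overline{F_{K'}}\Bigr\|_s^s\lesssim\Bigl\|\sum_{|K-K'|\le M}F_K\overline{F_{K'}}\Bigr\|_s^s+\Bigl\|\sum_{|K-K'|>M}F_K\overline{F_{K'}}\Bigr\|_s^s.
\]
Throughout I would exploit that the horizontal projections $\pi_1(\Omega(K))$ are pairwise disjoint (each $\xi_1$ determining the length $2^{-K}$ of its fiber), so the $\Omega_\delta(K)$ are disjoint and the $F_K$ have disjoint Fourier supports on $S$, and that each $\Omega_\delta(K)$ is covered by $O(\delta^{-C})$ tiles of $\scriptD_{J(K),K}$ and obeys the bounds of Proposition~\ref{P:Omega tile}.

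For the near-diagonal part I would write $\sum_{|K-K'|\le M}F_K\overline{F_{K'}}=\sum_{|m|\le M}\sum_K F_K\overline{F_{K+m}}$ and, for each fixed $m$, decompose each $F_K$ into its $O(\delta^{-C})$ constituent tile-pieces so as to be in the setting of a Whitney decomposition, apply the almost-orthogonality lemma of \cite{TVV} (available since $s\le 2$, as in the proof of Lemma~\ref{L:Omega eta}) in $K$, and then use H\"older's inequality together with $\|F_K\|_{2s}^s\|F_{K+m}\|_{2s}^s\le\tfrac12(\|F_K\|_{2s}^{2s}+\|F_{K+m}\|_{2s}^{2s})$. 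Summing over the $O(\log\delta^{-1})$ values of $m$ (the diagonal $m=0$ included) and using $s\le 2$, this bounds the near-diagonal part by $(\log\delta^{-1})^2\sum_{K\in\scriptK(\eps)}\|F_K\|_{2s}^{2s}$, which is the first term on the right.

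For the off-diagonal part the decisive ingredient is a bilinear estimate with geometric decay: for some admissible $c_0>0$ and $K\ne K'$,
\[
\|F_K\overline{F_{K'}}\|_s\lesssim 2^{-c_0|K-K'|}\max\{|\Omega_\delta(K)|,|\Omega_\delta(K')|\}^{1/s'}.
\]
I would derive this from the rescaled separated-tile hypothesis \eqref{E:LsLr bilin} by a further decomposition of $\Omega_\delta(K)\times\Omega_\delta(K')$ into pairs of tiles that, at the coarsest dyadic scale, are vertically separated by $\gtrsim 2^{-\min\{K,K'\}}$; the gains $2^{-(j+k)(2-2/s-2/r)}$ produced by \eqref{E:LsLr bilin} sum to $2^{-c_0|K-K'|}$ because only $O(|K-K'|)$ exceptional annuli of pairs remain, and these are absorbed using the disjointness of the $\pi_1(\Omega(K))$ together with the improved extension bound $\|\scriptE\chi_{\Omega'}\|_{2s}\lesssim\delta|\Omega|^{1/s'}$ of Proposition~\ref{P:Omega tile}. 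Granting this, the triangle inequality, $\max\{a,b\}^{1/s'}\le a^{1/s'}+b^{1/s'}$, the a priori size bounds $|\Omega_\delta(K)|\le|\Omega(K)|\lesssim 2^{-K}$, and a geometric-series summation (first in $K'$, then in $K$) reduce the off-diagonal part to $\lesssim 2^{-c_0 M}\sum_{K\in\scriptK(\eps)}|\Omega_\delta(K)|^{1/s'}$; balancing this sum against the size bounds of Proposition~\ref{P:Omega tile} — in which the smallness of $\Omega_\delta(K)$ for small $\delta$, together with the disjointness identity $\sum_K|\Omega_\delta(K)|\le|\Omega|$, is used essentially — and choosing $A$ large enough yields the bound $\delta|\Omega|^{2s/s'}$.

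The step I expect to be the main obstacle is the bilinear estimate with decay. Unlike in the $L^2$-based deduction underlying \cite{TVV}, the sets $\Omega_\delta(K)$ and $\Omega_\delta(K')$ sit at two genuinely different vertical scales $2^{-K}$ and $2^{-K'}$, so the auxiliary decomposition must reconcile these scales while keeping the number of ``bad'' annuli — those that escape the gain in \eqref{E:LsLr bilin} — linear in $|K-K'|$, and one must simultaneously track the $|\Omega|$-dependence carefully enough to land exactly at the power $|\Omega|^{2s/s'}$ (rather than a formally worse one). A secondary point is checking that the almost-orthogonality lemma of \cite{TVV} applies in the present, non-dyadic configuration, which is precisely why one first passes to the constituent tiles and invokes the disjointness of the horizontal projections.
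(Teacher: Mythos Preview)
There is a genuine gap in your off-diagonal summation. After the triangle inequality in $L^s$ (with $s\ge1$) and the bilinear decay estimate (whose right-hand side should carry the exponent $2/s'$, not $1/s'$), you arrive at a bound of the form
\[
\Bigl(2^{-c_0M}\sum_{K\in\scriptK(\eps)}|\Omega(K)|^{2/s'}\Bigr)^s,
\]
which you want to dominate by $\delta|\Omega|^{2s/s'}$. But $s<2$ forces $2/s'<1$, so the inequality $\sum_K a_K^{p}\le(\sum_K a_K)^{p}$ fails for $p=2/s'$; the quantity $\sum_K|\Omega(K)|^{2/s'}$ can be arbitrarily large compared with $|\Omega|^{2/s'}$, and Proposition~\ref{P:Omega tile} gives no upper bound on $|\Omega_\delta(K)|$ (only on $\|\scriptE\chi_{\Omega_\delta(K)}\|_{2s}$). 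The paper repairs this by expanding one step further, writing
\[
\Bigl\|\sum_K F_K\Bigr\|_{2s}^{2s}=\int\Bigl|\sum_{\mathbf K\in\scriptK^4}\prod_{i=1}^4 F_{K_i}\Bigr|^{s/2}
\]
and exploiting $s/2<1$: the pointwise inequality $|\sum a_n|^{s/2}\le\sum|a_n|^{s/2}$ then applies term by term. Each off-diagonal four-fold term is bounded, via H\"older and Lemma~\ref{L:separated tiles}, by a constant times $2^{-c_0|K_1-K_4|}|\Omega(K_1)|^{2s/s'}$, and now the exponent $2s/s'>1$ (using $s>3/2$) makes $\sum_{K_1}|\Omega(K_1)|^{2s/s'}\le|\Omega|^{2s/s'}$ hold.

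Your near-diagonal step has a related problem: the almost-orthogonality you invoke for $\sum_K F_K\overline{F_{K+m}}$ is not available. The Fourier supports of these products are difference sets on $S$, and for varying $K$ they do not have the essentially disjoint Whitney-type sum-set structure the lemma of \cite{TVV} requires; for $m=0$ each such support contains the origin, so even that case does not yield $\|\sum_K|F_K|^2\|_s^s\lesssim\sum_K\|F_K\|_{2s}^{2s}$. The paper avoids this entirely by first thinning $\scriptK(\eps)$ to a subset $\scriptK$ on which both $K$ and $J(K)$ are $A\log\delta^{-1}$-separated (this is precisely where the factor $(\log\delta^{-1})^2$ originates); on $\scriptK$ the four-fold diagonal is literally $\sum_K\|F_K\|_{2s}^{2s}$, and every remaining term is automatically far off-diagonal in both parameters.
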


\begin{proof}[Proof of Lemma~\ref{L:Omega delta decouple}]
It suffices to prove 
$$
\|\sum_{K \in \scriptK} \scriptE \chi_{\Omega_\delta(K)}\|_{2s}^{2s} \lesssim \sum_{K \in \scriptK} \|\scriptE \chi_{\Omega_\delta(K)}\|_{2s}^{2s} + \delta^2 |\Omega|^{\frac{2s}{s'}},
$$
with $\scriptK \subseteq \scriptK(\eps)$ chosen so that $\scriptK$ and $J(\scriptK)$ are both $A \log \delta^{-1}$-separated, with $A$ a sufficiently large admissible constant.  ($A$ will be much larger than the constant $C$ in Proposition~\ref{P:Omega tile}.)  Since $s < 2$, the triangle inequality gives
\begin{align*}
&\|\sum_{K \in \scriptK} \scriptE \chi_{\Omega_\delta(K)}\|_{2s}^{2s} = \int \bigl|\sum_{\mathbf K \in \scriptK^4} \prod_{i=1}^4 \scriptE \chi_{\Omega_\delta(K_i)}\bigr|^{\frac s2}\\
&\qquad\qquad \lesssim \sum_{K \in \scriptK} \|\scriptE \chi_{\Omega_\delta(K)}\|_{2s}^{2s} + {\sum}' \|\prod_{i=1}^4 \scriptE \chi_{\Omega_\delta(K_i)}\|_{\frac s2}^{\frac s2},
\end{align*}
where ${\sum}'$ indicates a sum taken on quadruples $\mathbf K = (K_1, K_2,K_3, K_4) \in \scriptK^4$, with at least two entries distinct.  We take a moment from the proof of Lemma~\ref{L:Omega delta decouple} to prove the following.

\begin{lemma} \label{L:separated tiles}
If $K,K' \in \scriptK$, and $J := J(K)$, $J' := J(K')$, then 
\begin{equation} \label{E:separated tiles}
\|\scriptE \chi_{\Omega_\delta(K)} \scriptE \chi_{\Omega_\delta(K')}\|_s \lesssim 2^{-c_0|K-K'|}\max\{|\Omega(K)|,|\Omega(K')|\}^{\frac2{s'}},
\end{equation}
for some admissible constant $c_0>0$.
\end{lemma}

\begin{proof}[Proof of Lemma~\ref{L:separated tiles}]
If $K=K'$, the inequality is a trivial consequence of Cauchy--Schwarz and \eqref{E:OmegaK}.  If $J=J'$, we again apply Cauchy--Schwarz and \eqref{E:OmegaK}, since
$$
|\Omega(K)|^{\frac1{s'}}|\Omega(K)|^{\frac1{s'}} \sim 2^{-\frac{|K-K'|}{s'}}\max\{|\Omega(K)|,|\Omega(K')|\}^{\frac2{s'}}.
$$
Thus it remains to consider the cases when $J$ and $J'$, and likewise, $K$ and $K'$, differ.  By symmetry, it suffices to consider the cases $J < J'$, $K < K'$; and $J > J'$, $K < K'$.  

If $J < J'$ and $K < K'$, then $|\Omega(K)| \sim 2^{-(J+K)} \geq 2^{-|K-K'|}|\Omega(K')|$, so \eqref{E:separated tiles} follows from Theorem~\ref{T:Vargas} and Cauchy--Schwarz.  

Thus we may assume that $K < K'$ and $J > J'$.  By Proposition~\ref{P:Omega tile} and the separation condition on $\scriptK$, it suffices to prove that 
\begin{equation} \label{E:log sep near tiles}
\|\scriptE \chi_{\Omega_\tau(K)} \scriptE \chi_{\Omega_{\tau'}(K')}\|_s \lesssim \delta^{-C} 2^{-c_0|K-K'|}|\Omega(K)|^{\frac1{s'}}|\Omega(K')|^{\frac1{s'}},
\end{equation}
for tiles $\tau \in \scriptT_\delta(K)$, $\tau' \in \scriptT_\delta(K')$.  

Note that our conditions on $J,J',K,K'$ mean that $\tau$ is taller than $\tau'$, and $\tau'$ is wider than $\tau$.  By translating, we may assume that the $y$-axis forms the center line of $\tau$ and that the $x$-axis forms the center line of $\tau'$.  Recalling that our tiles are contained in $2\tau_0$, we decompose:
\begin{gather*}
\tau = \bigcup_{k=0}^{K'} \tau_k, \qquad \tau_k = \tau \cap \{\xi : |\xi_2| \sim 2^{-k}\},\\
\tau' = \bigcup_{j=0}^J \tau_j', \qquad \tau_j' = \tau' \cap \{\xi : |\xi_1| \sim 2^{-j}\}.
\end{gather*}
By the (2-parameter) Littlewood--Paley square function estimate (the two-parameter version can be proved using Khintchine's inequality), the fact that $s < 2$, and the triangle inequality,
\begin{equation} \label{E:square function}
\|\scriptE \chi_{\Omega_\tau(K)} \scriptE \chi_{\Omega_{\tau'}(K')}\|_s^s \lesssim \sum_{k=0}^{K'} \sum_{j=0}^J \|\scriptE \chi_{\tau_k \cap \Omega(K)} \scriptE \chi_{\tau_j' \cap \Omega(K')}\|_s^s.
\end{equation}

We begin with the sum over those terms with $k=K'$.  By Cauchy--Schwarz and \eqref{E:OmegaK},
\begin{align*}
\sum_{j=0}^J \|\scriptE \chi_{\tau_{K'} \cap \Omega(K)} \scriptE \chi_{\tau_j' \cap \Omega(K')}\|_s^s
 &\lesssim \sum_{j=0}^J \|\scriptE \chi_{\tau_{K'} \cap \Omega(K)}\|_{2s}^s\|\scriptE\chi_{\tau_j' \cap \Omega(K')}\|_{2s}^s\\
  &\lesssim \sum_{j=0}^J |\tau_{K'}|^{\frac s{s'}}|\tau_j'|^{\frac s{s'}}.
\end{align*}
Because of the way the $\tau_j'$ were defined, we have at most two nonempty $\tau_j'$ with $j \leq J'$.  This, combined with the bound $|\tau_j'| \leq \min\{2^{-(j-J')},1\} |\tau'|$ gives $\sum_j |\tau_j'|^{\frac s{s'}} \lesssim |\tau'|^{\frac s{s'}}$ (despite the fact that $s < s'$).  Since $|\tau_{K'}| \sim 2^{-(K'-K)}|\tau|$, $|\tau| \sim |\Omega(K)|$, and $|\tau'| \sim |\Omega(K')|$, 
$$
\sum_{j=0}^J \|\scriptE \chi_{\tau_{K'} \cap \Omega(K)} \scriptE \chi_{\tau_j' \cap \Omega(K')}\|_s^s \lesssim  2^{-(K'-K) \frac s{s'}}|\Omega(K)|^{\frac s{s'}}|\Omega(K')|^{\frac s{s'}}.
$$

In the case $j=J$, a similar argument implies that
\begin{align*}
\sum_{k=0}^{K'} \|\scriptE \chi_{\tau_k \cap \Omega(K)}\scriptE \chi_{\tau_J' \cap \Omega(K')}\|_s^s &\lesssim 2^{-(J-J')\frac{s}{s'}}|\Omega(K)|^{\frac s{s'}} |\Omega(K')|^{\frac s{s'}}\\
&  \sim 2^{-(K'-K)\frac s{s'}}|\Omega(K)|^{\frac{2s}{s'}}.
\end{align*}

In the cases $k < K'$ and $j < J$, we have a gain, due to our bilinear extension estimate.  If $k < K'$ and $j < J$, $\tau_k$ is a (subset of four) tile(s) in $\scriptD_{J,\max\{k,K\}}$, $\tau_j$ is a (subset of four) tile(s) in $\scriptD_{\max\{j,J'\},K'}$, and these tiles are separated by a distance $2^{-k}$ in the vertical direction $2^{-j}$ in the horizontal direction.  These tiles are contained in separated tiles in $\scriptD_{j,k}$, so by the hypotheses of our theorem, for any $r < r_0$,
$$
\|\scriptE\chi_{\tau_k \cap \Omega(K)} \scriptE \chi_{\tau_j' \cap \Omega(K')}\|_s \lesssim  2^{-(j+k)(\frac2{s'} - \frac2r)} |\tau_k \cap \Omega(K)|^{\frac1r}|\tau_j' \cap \Omega(K')|^{\frac1r}.
$$
From our observation above that we have at most two values of $j$ (resp.\ $k$) in our sum with $j \leq J'$ (resp.\ $k \leq K$), our assumption that $r < s'$ gives
\begin{align*}
&\sum_{j=0}^{J} \sum_{k=0}^{K'} 2^{-(j+k)(\frac{2s}{s'} - \frac{2s}r)} |\tau_k \cap \Omega(K)|^{\frac sr}|\tau_j' \cap \Omega(K')|^{\frac sr}
 \leq \sum_{j=0}^J \sum_{k=0}^{K'} 2^{-(j+k)(\frac{2s}{s'} - \frac{2s}r)} |\tau_k|^{\frac sr}|\tau_j'|^{\frac sr}\\
&\qquad\qquad \lesssim 2^{-(J'+K)(\frac{2s}{s'} - \frac{2s}r)}|\tau|^{\frac sr}|\tau'|^{\frac sr}
\sim \delta^{-C}2^{-(J'+K)(\frac{2s}{s'} - \frac{2s}r)}|\Omega(K)|^{\frac sr}|\Omega(K')|^{\frac sr}\\
&\qquad\qquad \lesssim \delta^{-C} 2^{(J-J'+K'-K)(\frac sr-\frac s{s'})}|\Omega(K)|^{\frac s{s'}}|\Omega(K')|^{\frac s{s'}},
\end{align*}
which, by \eqref{E:square function}, is stronger than \eqref{E:log sep near tiles}
\end{proof}

We return to the proof of Lemma~\ref{L:Omega delta decouple}

Let $K_1,K_2,K_3,K_4 \in \scriptK$, not all equal.  Rearranging indices if needed, we may assume that $N_1 := K_1+J(K_1)$ is minimal among all $N_i$ and that $|K_1-K_4| \geq \tfrac12|K_i-K_j|$ for all $i,j$.  Thus $|\Omega(K_1)|$ is maximal.  By H\"older's inequality and Lemma~\ref{L:separated tiles},
$$
\|\prod_{i=1}^4 \scriptE \chi_{\Omega_\delta(K_i)}\|_{\frac s2} \lesssim 2^{-c_0|K_1-K_4|} |\Omega(K_1)|^{\frac4{s'}}.
$$
Therefore
$$
{\sum}' \|\prod_{i=1}^4 \scriptE \chi_{\Omega_\delta(K_i)}\|_{\frac s2} \lesssim \sum_{K_1 \in \scriptK} \sum_{K_1 \neq K_4 \in \scriptK} |K_4-K_1|^2 2^{-c_0|K_4-K_1|}|\Omega(K_1)|^{\frac{2s}{s'}}.
$$
Since $2s > s'$ and $\scriptK$ is $A \log \delta^{-1}$-separated for some very large $A$, our error term is bounded by $\delta^C |\Omega|^{\frac{2s}{s'}}$.
\end{proof}

\begin{proof}[Proof of Theorem~\ref{T:main}]  We decompose $\Omega$ by fiber length, and decompose the fiber lengths according to the exactness of Vargas's estimate:  
\begin{align*}
\|\scriptE \chi_\Omega\|_{2s} &\leq \sum_{0 < \eps \lesssim 1}\sum_{0 < \delta \leq \eps} \|\sum_{K \in \scriptK(\eps)} \scriptE \chi_{\Omega_\delta(K)}\|_{2s} \\
&\lesssim \sum_{0 < \eps \lesssim 1}\sum_{0 < \delta \leq \eps} \bigl[\bigl(\log \delta^{-1} \sum_{K \in \scriptK(\eps)} \|\scriptE \chi_{\Omega_\delta(K)}\|_{2s}^{2s}\bigr)^{\frac1{2s}}+\delta|\Omega|^{\frac1{s'}}\bigr]\\
&\lesssim \sum_{0 < \eps \lesssim 1} \sum_{0 < \delta \leq \eps} \bigl[\bigl((\log\delta^{-1}\delta)^{2s} \sum_{K \in \scriptK(\eps)} |\Omega(K)|^{\frac{2s}{s'}}\bigr)^{\frac1{2s}} + \eps|\Omega|^{\frac1{s'}}\bigr]\\
&\lesssim \sum_{0 < \eps \lesssim 1} \sum_{0 < \delta \leq \eps} \log \delta^{-1}\delta |\Omega|^{\frac1{s'}} \lesssim |\Omega|^{\frac1{s'}},
\end{align*}
where, for the second to last inequality we are using the fact that $2s > s'$ and the triangle inequality for $\ell^{\frac{2s}{s'}}$ to sum the volumes of the $\Omega(K)$.  
\end{proof}



\end{document}